\newcommand{\mb}{\mathbf}
\newcommand{\C}{\mathbb C}
\newcommand{\N}{\mathbb N}
\numberwithin{equation}{section}
\theoremstyle{plain}
\newtheorem{theorem}{Theorem}
\newtheorem{lemma}{Lemma}
\theoremstyle{definition}
\newtheorem{example}{Example}
\begin{document}
	
\title[Generalized Chebyshev Acceleration]{Generalized Chebyshev Acceleration \\on the unit disc}
	
\author[Nurgül Gökgöz]{Nurgül Gökgöz}
\address{Çankaya University, Mathematics Department, 06815 Ankara, Turkey.}
\email{ngokgoz@cankaya.edu.tr}

\date{\today}
	
\begin{abstract}
Generalized Chebyshev acceleration is a semi-iterative technique applicable to a basic iterative method only when the eigenvalues of the iteration matrix satisfy a highly restrictive inclusion condition. In this work, we relax this requirement by introducing an alternative iterative scheme that converges to the same solution. The effectiveness of the proposed approach is examined through its application to a large-scale sparse normal matrix.
\end{abstract}
	
	
\subjclass[2010]{65F10}
	
\keywords{semi-iterative method, generalized Chebyshev polynomials, normal matrix, asymptotic convergence}
	
\maketitle
\section{Introduction}
Iterative methods can be useful for solving systems of linear equations $A\mb{x}=\mb{b}$ involving many variables where direct methods exceed the available computing power. Basic examples of stationary iterative methods use a splitting of the matrix $A$ by means of its diagonal and triangular (upper and lower) parts. See \cite{greenbaum}, and \cite{varga}, for several iterative algorithms.

In this paper we focus on the following iterative process:
\begin{equation}\label{iter}
	\mb{x}^{(m)}=M\mb{x}^{(m-1)}+\mb{g}
\end{equation}
We assume that the spectral radius $\rho(M)$ is less than one so that this process converges. For each initial vector $\mb{x}^0$, such an iterative procedure converges to the unique solution of $(I_n-M)\mb{x}=\mb{g}$.

Chebyshev acceleration is a semi-iterative method that works efficiently if $M$ has real spectrum \cite{varga}. Recently, we have introduced a new semi-iterative method which works for a larger spectrum \cite{gokgoz}. This generalization is possible thanks to the existence of generalized Chebyshev polynomials associated with the root system $A_2$. This family of polynomials has very nice dynamical properties, and maps a large subset of the unit disc onto itself, which turns out to be a deltoid region \cite{veselov, veselov-survey}. This analogy, with the closed interval $[-1,1]$ for classical Chebyshev polynomials, is the key property for introducing a generalized semi-iterative method inspired by the classical Chebyshev acceleration.

This new semi-iterative method has a serious restriction for its application to (\ref{iter}). Suppose that we know a nonzero eigenvalue $\lambda_1$ of $M$ with maximal absolute value, i.e. $0\neq |\lambda_1|=\rho(M)$. To apply the generalized Chebyshev acceleration, the quotients $\lambda/\lambda_1$ must lie in a certain region $\Delta$, defined by (\ref{deltoid}), for each eigenvalue $\lambda$ of $M$. However, real-life problems yield iterative methods whose associated matrices $M$ do not generally satisfy this condition.

In this paper, we extend the use of the generalized Chebyshev acceleration, associated with $A_2$, from the deltoid $\Delta$ to the whole unit disc. This is achieved by replacing (\ref{iter}) with an alternative iterative process that converges to the same solution. More precisely, we consider
\begin{equation}\label{newiter}
	\mb{x}^{(m)}=M^k\mb{x}^{(m-1)}+\mb{h}
\end{equation}
where  $\mb{h} = (I+M+\ldots + M^{k-1})\mb{g}$. This new iterative process has a different iteration matrix, namely $M^k$, with eigenvalues in a smaller disc rather than the whole unit disc. If $k\geq 2$, we need to perform more multiplications of vectors by $M$, and therefore each iterative step requires more computation. On the other hand, this extended method is applicable to almost all iterative processes (\ref{iter}). A case-by-case analysis can be found in Theorem~\ref{existingk} and Theorem~\ref{exclude}.

The organization of the paper is as follows. In the second section, we summarize the basics of semi-iterative methods, fix some notation, and outline the classical and the generalized Chebyshev acceleration methods. In the third section, we explain how to relax the membership condition for the quotients of the eigenvalues of $M$ from the deltoid to the unit disc, and give a low-dimensional example to illustrate that this approach may not be optimal in certain cases. In the last section, we analyze the efficiency of our method with sparse matrices of large dimension.

\section{Semi-Iterative Methods}
An iterative method is a mathematical procedure that uses an initial value to generate a sequence of improving approximate solutions. For a comprehensive treatment of the subject, one can see \cite{greenbaum}, and \cite{varga}.

We start with summarizing the basics of semi-iterative methods, and fix some notation that will be used in the rest of the paper. We will mostly follow the exposition of \cite{varga}.

Consider the iterative process (\ref{iter}), which convergences to the required solution of the system $(I-M)\mb{x}=\mb{g}$. This unique solution satisfies
\begin{equation}\label{relation}
	\mb{x}=M\mb{x}+\mb{g}.
\end{equation}
We define the error vector as $\epsilon^{(m)}:=\mb{x}-\mb{x}^{(m)}$. Combining (\ref{iter}) and (\ref{relation}) inductively, we see that the $m$th error vector equals $M^m\epsilon^{(0)}$. The error vector converges to zero if and only if all the eigenvalues $M$ have absolute values less than one, i.e., the spectral radius $\rho(M)$ is less than one. 

We may obtain a faster rate of convergence by weighting the components of the error vector suitably. Consider the following linear combination
\begin{equation}\label{y}
	\mb{y}^{(m)} := \sum_{j=0}^{m} \nu_j(m)\mb{x}^{(j)}, \quad m\geq 0.
\end{equation}
This linear combination is called a \textit{semi-iterative method} with respect to the iterative method of (\ref{iter}). We want this new iteration to converge to the same solution, and therefore, there is a restriction on the coefficients $\nu_j(m)$. We must have
\begin{equation}\label{restrict}
	\sum_{j=0}^{m} \nu_j(m)=1.
\end{equation}
We shall relate the error of the former iterative process to the error of this new setup. For this purpose, we introduce a new notation, and define the error vector in the semi-iterative case to be $\eta^{(m)}:=\mb{x}-\mb{y}^{(m)}$.  Using the linear combination given in the equation (\ref{y}), we obtain 
\begin{equation*}
	\eta^{(m)} =\mb{x}-\sum_{j=0}^{m} \nu_j(m)\mb{x}^{(j)} = \sum_{j=0}^{m} \nu_j(m)(\mb{x}-\mb{x}^{(j)}) = \sum_{j=0}^{m} \nu_j(m)\epsilon^{(j)}.
\end{equation*}
We define $p_m(\lambda) := \sum_{j=0}^{m} \nu_j(m)\lambda^j$. It follows that 
\begin{equation}\label{eta}
	\eta^{(m)} = p_m(M)\epsilon^{(0)}.
\end{equation}
With standard definitions of vector and matrix norms, see \cite{varga}, it follows that 
\begin{equation*}
	\lVert\eta^{(m)}\rVert = \lVert  p_m(M)\epsilon^{(0)} \rVert \leq \lVert  p_m(M)\rVert \cdot \lVert\epsilon^{(0)} \rVert, \quad m\geq 0.
\end{equation*}
If we can make the quantities $\lVert p_m(M)\rVert$ smaller, compared to $\lVert M^m\rVert$, then the bounds for $\lVert\eta^{(m)} \rVert$ for the semi-iterative method are proportionally smaller, compared to $\lVert\epsilon^{(m)}\rVert$.

\subsection{Chebyshev Acceleration}
Now, we give a short summary of the Chebyshev semi-iterative method with respect to (\ref{iter}) following \cite{varga}. Suppose that $M$ has real eigenvalues with absolute values strictly less than one, and suppose also that we know the spectral radius $0<\rho=\rho(M)<1$. By the definition of Chebyshev semi-iterative method, we choose
\begin{equation}\label{pm}
	p_m(\lambda)=\frac{C_m(\lambda/\rho)}{C_m(1/\rho)}.
\end{equation}
where $C_m$ is the $m$th Chebyshev polynomial. We obviously have $p_m(1)=1$. In other words the restriction (\ref{restrict}) is satisfied. The Chebyshev polynomials $C_m$ are uniquely defined by the following functional equation
\[C_m(\cos(\theta))=\cos(m\theta).\]
The first few Chebyshev polynomials are $C_0(t)=1$, $C_1(t)=t$, $C_2(t)=2t^2-1$, and they satisfy the recurrence relation 
\[C_{m}(t)=2tC_{m-1}(t)-C_{m-2}(t), \quad m\geq 2.\]
This relation, applied to $C_m(\lambda/\rho) = C_m(1/\rho)p_m(\lambda)$, gives us
\[p_m(\lambda) = \frac{2\lambda C_{m-1}(1/\rho)}{\rho C_m(1/\rho)}p_{m-1}(\lambda) - \frac{C_{m-2}(1/\rho)}{C_m(1/\rho)}p_{m-2}(\lambda), \quad m \geq 2.\]
We now plug in $M$ for $\lambda$, and postmultiply throughout by $\epsilon^{(0)}$. Using the relation with the error vector, see (\ref{eta}), and the definition $\eta^{(m)}=\mb{x}-\mb{y}^{(m)}$,
we obtain the following semi-iterative process
\begin{equation}\label{itercheby}
	\mb{y}^{(m)} = \frac{2C_{m-1}(1/\rho)}{\rho C_{m}(1/\rho)}(M\mb{y}^{(m-1)}+\mb{g}) - \frac{C_{m-2}(1/\rho)}{C_{m}(1/\rho)}\mb{y}^{(m-2)}.
\end{equation}
For simplicity, we can choose $\mb{y}^{(0)}=\mb{x}^{(0)}$, and  $\mb{y}^{(1)}=\mb{x}^{(1)}$. Unlike (\ref{iter}), this semi-iterative method includes two earlier vectors instead of one. This requires additional vector storage. This is of minor importance in computer applications.  We also note that, the terms $\mb{y}^{(j)}$ for $j=0,1,2, \ldots$ can be computed directly without going through (\ref{iter}).

\subsection{Generalized Chebyshev Acceleration}
Lastly, we give a short summary of the generalized Chebyshev semi-iterative method, associated with the root system $A_2$, with respect to (\ref{iter}). See \cite{gokgoz} for more details.

The generalized Chebyshev polynomials are defined by means of exponential invariants of Bourbaki \cite{Bourbaki}. These polynomials are studied by Veselov \cite{veselov}, and by Hoffman and Withers \cite{hoffwith}, independently, and they have interesting properties in the theory of dynamical systems and numerical analysis. We will use a normalized generalized cosine function given in \cite{munthe}. For $\theta=(\theta_1,\theta_2) \in \mathbb{C}^2$, we define $\Phi=(\phi_1,\phi_2)$ as follows
\begin{equation}\label{gencos}
	\varphi_1(\theta)=\frac{1}{3}\left(e^{2\pi i \theta_1}+e^{-2\pi i \theta_2}+e^{2\pi i (\theta_2-\theta_1)}\right), \quad \varphi_2(\theta)=\overline{\varphi_1(\theta)}.
\end{equation}
There exists an associated infinite sequence of polynomial mappings $\mathcal{A}_m: \C^2 \rightarrow \C^2, m \in {\N}$ satisfying the conditions
\begin{equation*}
	\mathcal{A}_m(\Phi(\theta)) = \Phi(m\theta).
\end{equation*}
If we write $\mathcal{A}_m=(f_m,g_m)$, then the first component satisfies $f_m(\Phi(\theta))=\varphi_1(m\theta)$. Putting $x=\varphi_1(\theta)$, and $\bar{x}=\varphi_2(\theta)$, and using (\ref{gencos}), we see that the second component is obtained by switching the variables of the first component, i.e. $f_m(x,\bar{x})=g_m(\bar{x},x)$. In this paper, we will use the first component, and we write $f_m(x)$ instead of $f_m(x,\bar{x})$, for simplicity. The first few of these polynomials are listed below:
\begin{align*}
	f_0(x) &= 1\\
	f_1(x) &= x\\
	f_2(x) &= 3x^2-2\bar{x}\\
	f_3(x) &= 9x^3-9x\bar{x}+1
\end{align*}
These polynomials satisfy the following recursive relation
\begin{equation}\label{recursion}
	f_m = 3xf_{m-1}-3\bar{x}f_{m-2}+f_{m-3}, \quad m\geq 3.
\end{equation}
The key property of these polynomials that motivates the work \cite{gokgoz} is the fact that $f_m$ maps a large subset of $\C$ onto itself. Consider
\begin{equation}\label{deltoid}
	\Delta=\{ \varphi_1(\theta) \mathrel{:} \theta \in \mathbb{R}^2\}.
\end{equation}
This region is enclosed by \textit{Steiner hypocycloid}, a deltoid curve with three corners at third roots of unity. For an illustration of this region, see Figure~\ref{fig:deltoid}.
\begin{figure}[htbp]
	\centering
	\includegraphics[scale=0.6]{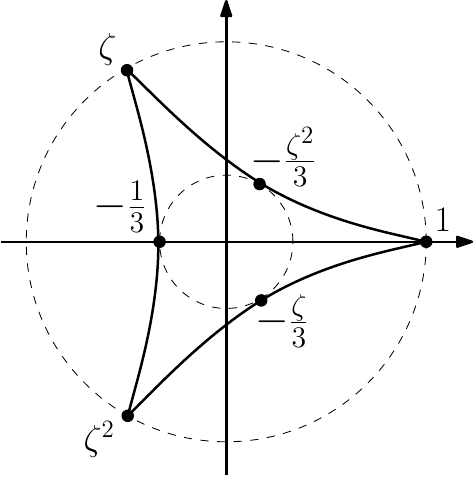}
	\caption{The region $\Delta$.}
	\label{fig:deltoid}
\end{figure}	

The Julia set of the polynomials $\mathcal{A}_m=(f_m,g_m)$ is $\Delta\times \Delta$. See \cite{veselov} or \cite{veselov-survey}. So the region $\Delta$ is analogous to the interval $[-1,1]$ in the classical case. If we have a real number $\gamma\in [-1,1]$, then the sequence $\{C_m(\gamma)\}_{m=1}^\infty$ remains in the interval $[-1,1]$. Similarly, if we have a complex number $\tau\in\Delta$, then the sequence $\{f_m(\tau)\}_{m=1}^\infty$ remains in the region $\Delta$. This analogy allows us to generalize the Chebyshev semi-iterative method to a wider setting. Following (\ref{pm}), and adapting to this new situation, we choose
\[p_m(\lambda) = \frac{f_m(\lambda/\lambda_1)}{f_m(1/\lambda_1)},\]
where $\lambda_1$ is an eigenvalue of $M$ with maximal absolute value. We obviously have $p_m(1)=1$, and the basic restriction (\ref{restrict}) is satisfied. Moreover, using the recursive relation (\ref{recursion}), we now have
\begin{align}\label{keyrecursion}
	\begin{split}	
		f_m(1/\lambda_1)p_m(\lambda) =\ & 3 \frac{\lambda}{\lambda_1} f_{m-1}(1/\lambda_1)p_{m-1}(\lambda)\\
		&-3\frac{\bar{\lambda}}{\bar{\lambda}_1} f_{m-2}(1/\lambda_1)p_{m-2}(\lambda)\\
		&+f_{m-3}(1/\lambda_1)p_{m-3}(\lambda), \quad m\geq 3.
	\end{split}	
\end{align}
We want the condition (\ref{eta}) to be satisfied. In order to relate the term $\bar{\lambda}$ in the above equation to the matrix $M$, while forming our new iterative method, we need to introduce a new matrix that is closely related to $M$.

Suppose that $M$ has spectral radius $0<\rho(M)<1$ with possibly non-real eigenvalues. Suppose that we know an eigenvalue $\lambda_1$ with maximal absolute value, i.e. $0<|\lambda_1|=\rho(M)$. Suppose further that $\lambda/\lambda_1$ lies in the region $\Delta$, defined by (\ref{deltoid}), for each eigenvalue $\lambda$ of $M$. Suppose $\widetilde{M}$ is an $n\times n$ matrix with the same set of eigenvalues as $M$, but the eigenvectors are switched with their complex conjugate counterparts. If $M$ is a normal matrix, then $\widetilde{M}$ can be chosen to be the conjugate transpose of $M$. More generally, if $D_0=P^{-1}MP$ is a diagonal matrix, then one can pick $\widetilde{M} = P \overline{D}_0 P^{-1}$. We also suppose that $\widetilde{M}\mb{x} + \widetilde{\mb{g}}=\mb{x}$ for some vector $\widetilde{\mb{g}}$. This condition allows us to deal with complex conjugation occurring in (\ref{keyrecursion}), and to have (\ref{eta}) to be satisfied for this particular $p_m$. 

We already have $M\mb{x}+\mb{g}=\mb{x}$ in the original setup. In addition, we have assumed  $\widetilde{M}\mb{x} + \widetilde{\mb{g}}=\mb{x}$ for some vector $\widetilde{\mb{g}}$. Let us plug in $M$, and $\widetilde{M}$ for $\lambda$, and $\bar{\lambda}$, respectively, in (\ref{keyrecursion}), and postmultiply throughout by $\epsilon^{(0)}$. Using the relation with the error vector, see (\ref{eta}), and by the choice of $\widetilde{M}$ associated with $M$, we obtain the following semi-iterative method,
\begin{align}\label{main}
	\begin{split}		
		\mb{y}^{(m)}=\ &\frac{3f_{m-1}(1/\lambda_1)}{\lambda_1 f_m(1/\lambda_1)}\left( M\mb{y}^{(m-1)} + \mb{g} \right) \\
		& -\frac{3f_{m-2}(1/\lambda_1)}{\bar{\lambda}_1 f_m(1/\lambda_1)}\left( \widetilde{M}\mb{y}^{(m-2)} + \widetilde{\mb{g}} \right)\\
		& +\frac{f_{m-3}(1/\lambda_1)}{f_m(1/\lambda_1)}\mb{y}^{(m-3)}, \quad m\geq 3.
	\end{split}		
\end{align}
For simplicity, we choose $\mb{y}^{(0)}=\mb{x}^{(0)}$, $\mb{y}^{(1)}=\mb{x}^{(1)}$, and $\mb{y}^{(2)}=\mb{x}^{(2)}$. This new semi-iterative method is of a similar form to (\ref{itercheby}), but involves three earlier vectors instead of two. This requires additional vector storage. Moreover, an additional matrix multiplication exists compared to the classical Chebyshev acceleration. We will take this into consideration in the last section while analyzing the efficiency of the generalized Chebyshev acceleration when it is applied to a normal sparse matrix of large dimension.

\section{From Deltoid to Unit Disc.}
An iterative process, such as (\ref{iter}), naturally comes with a matrix with spectral radius $\rho(M)$ less than one. Suppose that we know an eigenvalue $\lambda_1$, that is nonzero, with maximal absolute value, i.e. $|\lambda_1|=\rho(M)$. Recall that there are two restrictions while applying the generalized Chebyshev semi-iterative method, as listed below:
\begin{enumerate}
	\item the knowledge of a nonzero eigenvalue $\lambda_1$ with maximal absolute value, i.e. $0<|\lambda_1|=\rho(M)$, and the property that $\lambda/\lambda_1$ lies in the region $\Delta$ for each eigenvalue $\lambda$ of $M$.	
	\item the knowledge of an associated matrix $\widetilde{M}$ with the same set of eigenvalues as $M$ but the eigenvectors are switched with their complex conjugate counterparts, and an associated vector $\widetilde{\mb{g}}$ such that $(I_n-\widetilde{M})\mathbf{x}=\widetilde{\mb{g}}$.
\end{enumerate}
In this section, we will explain how the first restriction can be removed in most cases with an alternative iterative process (\ref{newiter}) that possibly requires more iterations. We start with proving the equivalence of these iterative methods.

\begin{lemma} Let $M$ be a matrix with spectral radius $\rho(M)<1$. Then the iterative processes (\ref{iter}) and (\ref{newiter}) converge to the same vector. 
\end{lemma}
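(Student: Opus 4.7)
The plan is to show that both iterations have well-defined limits and that those limits satisfy the same linear system, from which equality follows by invertibility.

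First I would observe that since $\rho(M) < 1$, we have $\rho(M^k) = \rho(M)^k < 1$ as well, so the iteration (\ref{newiter}) is convergent for every initial vector, and its fixed point is the unique solution of $(I_n - M^k)\mathbf{x}' = \mathbf{h}$. Likewise, the fixed point of (\ref{iter}) is the unique solution $\mathbf{x}$ of $(I_n - M)\mathbf{x} = \mathbf{g}$. So it suffices to verify that this $\mathbf{x}$ also solves $(I_n - M^k)\mathbf{x} = \mathbf{h}$.

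Next I would invoke the telescoping factorization
\begin{equation*}
I_n - M^k = (I_n - M)(I_n + M + M^2 + \cdots + M^{k-1}),
\end{equation*}
which holds by direct expansion and commutes since all factors are polynomials in $M$. Applying both sides to $\mathbf{x}$ and using $(I_n - M)\mathbf{x} = \mathbf{g}$ gives
\begin{equation*}
(I_n - M^k)\mathbf{x} = (I_n + M + \cdots + M^{k-1})(I_n - M)\mathbf{x} = (I_n + M + \cdots + M^{k-1})\mathbf{g} = \mathbf{h},
\end{equation*}
so $\mathbf{x}$ is the fixed point of (\ref{newiter}) as well. By uniqueness of the limit (guaranteed by $\rho(M^k) < 1$), the two iterative processes converge to the same vector.

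There is no real obstacle here; the only point that merits a sentence of justification is the invertibility of $I_n - M^k$, which follows immediately from $\rho(M^k) < 1$ via the Neumann series (or equivalently, from $1$ not being an eigenvalue of $M^k$). I would not bother separately proving that $I_n + M + \cdots + M^{k-1}$ is invertible, since the argument above never requires it — we only need to check that $\mathbf{x}$ is \emph{a} solution of $(I_n - M^k)\mathbf{x}' = \mathbf{h}$, and uniqueness comes from $I_n - M^k$ being invertible.
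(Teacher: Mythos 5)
Your proof is correct, and it rests on the same key identity as the paper's, namely the factorization $I_n - M^k = (I_n - M)(I_n + M + \cdots + M^{k-1})$. The difference is in how that identity is deployed. The paper argues through inverses: it first shows that $p(M) = I_n + M + \cdots + M^{k-1}$ is invertible (by observing that the roots of $p(t) = (t^k-1)/(t-1)$ all lie on the unit circle, hence $p(\lambda) \neq 0$ for every eigenvalue $\lambda$ of $M$ since $\rho(M) < 1$), and then computes $(I_n - M^k)^{-1}\mathbf{h} = (I_n - M)^{-1}p(M)^{-1}p(M)\mathbf{g} = (I_n-M)^{-1}\mathbf{g}$. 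You instead verify forward that the known solution $\mathbf{x}$ of $(I_n - M)\mathbf{x} = \mathbf{g}$ also satisfies $(I_n - M^k)\mathbf{x} = \mathbf{h}$, and conclude by the uniqueness coming from the invertibility of $I_n - M^k$ alone. Your arrangement is slightly leaner: it genuinely dispenses with the invertibility of $p(M)$ and the accompanying root-of-unity argument, which is the only nontrivial observation in the paper's version. What the paper's route buys in exchange is the explicit statement that $p(M)$ is invertible, i.e.\ that $\mathbf{h}$ determines $\mathbf{g}$ as well as conversely, which is a mildly useful fact but not needed for the lemma as stated. Both proofs are complete and correct.
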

\begin{proof}
Consider the polynomial $$p(t)=1+t+\ldots + t^{n-1}=\frac{t^n-1}{t-1}.$$ 
Each root of $p(t)$ is an $n$th root of unity, and it has absolute value one. The eigenvalues of the matrix $p(M)=I+M+\ldots + M^{k-1}$ is in one-to-one correspondence with the eigenvalues of $M$. Indeed, if $\lambda$ is an eigenvalue of $M$, then $p(\lambda)$ is an eigenvalue of $p(M)$. Since the spectral radius of $M$ is strictly less than one, the value $p(\lambda)$ is never zero. Therefore the matrix $p(M)$ is invertible. 

Recall that the original iterative process (\ref{iter}) converges to the unique solution of $(I-M)\mb{x}=\mb{g}$. Clearly, the spectral radius of $M^k$ is less than one. We observe that the new iterative process (\ref{newiter}) converges to the unique solution of $(I-M^k)\mb{x}=\mb{h}$ where $\mb{h}=p(M)\mb{g}$. These solutions are the same since 
$$(I-M^k)^{-1}\mb{h} = (I-M)^{-1}p(M)^{-1}\mb{h} = (I-M)^{-1}\mb{g}.$$
This finishes the proof.
\end{proof}

Now we investigate how the integer $k$ may be chosen. We study this problem in two separate cases that can be related to the spectral radius. For this purpose we recall that an eigenvalue $\lambda$ of $M$ is called \textit{dominant} if $\lambda=\rho(M)$. The importance of dominant eigenvalues comes from the fact that the Chebyshev acceleration, both the classical and the generalized version, uses a scaling of the spectrum.

Let $\lambda_1$ be a dominant nonzero eigenvalue of $M$. If the iteration matrix $M$ has a real spectrum and then the quotients $\lambda/\lambda_1$ naturally remain in the interval $[-1,1]$ for each eigenvalue $\lambda$ of $M$. This makes it possible to apply the classical Chebyshev acceleration method. However, in the generalized Chebyshev acceleration method, the quotients $\lambda/\lambda_1$ may not remain in the deltoid $\Delta$. 

Before we go further, we make a simple observation. Suppose that we want to apply the generalized Chebyshev acceleration to the iterative method (\ref{newiter}) with $k\geq 2$. We shall have $(\lambda/\lambda_1)^k\in\Delta$. This is possible if and only if the quotient $\lambda/\lambda_1$ is included in the inverse image of the deltoid $\Delta$ under the power map $z\mapsto z^k$. This inverse image gets larger and larger as $k$ gets bigger. For an illustration of this phenomenon, see Figure~\ref{fig:invimg}.
\begin{figure}[htbp]
	\centering
	\includegraphics[scale=0.8]{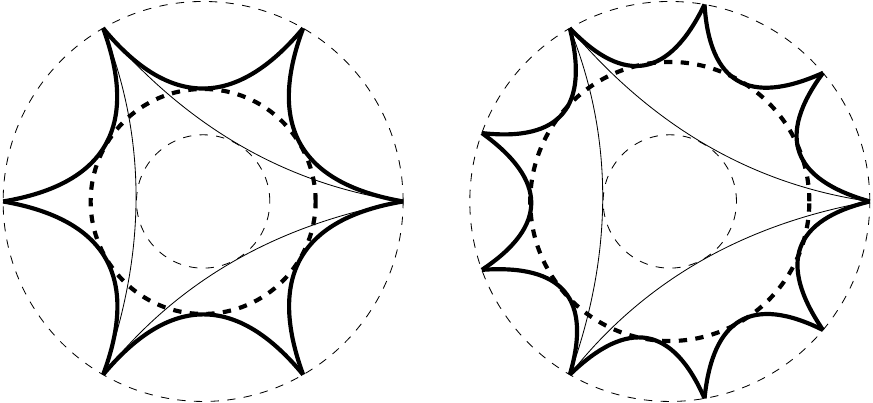}
	\caption{The inverse images of $\Delta$ under $z\mapsto z^k$ for $k=2,3$, respectively.}
	\label{fig:invimg}
\end{figure}

\subsection{Unique dominant eigenvalue}
Consider the iterative method (\ref{iter}), and assume that it has a unique dominant eigenvalue $\lambda_1$. This property can be checked in certain cases by using the Perron–Frobenius theorem, which asserts that a real square matrix with positive entries has a unique real dominant eigenvalue \cite{meyer}. In general, especially for arbitrarily large sparse matrices, there is no such conclusive result. However, one may use iterative methods, such as Arnoldi's iteration, to investigate the eigenvalues whose absolute values are large \cite{arnoldi}. 

Now we give our first main result.
\begin{theorem}\label{existingk}
Let $M$ be a matrix with spectral radius $\rho(M)<1$, and suppose that $M$ has a unique dominant eigenvalue $\lambda_1$. Let $k$ be the smallest positive integer such that $$\frac{1}{\sqrt[k]{3}} \geq \left|\frac{\lambda}{\lambda_1}\right|$$ for all eigenvalues of $\lambda$ of M. Then the generalized Chebyshev acceleration is applicable to (\ref{newiter}) for all positive integers bigger than or equal to $k$.
\end{theorem}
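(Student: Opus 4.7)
The plan is to verify the two preconditions of the generalized Chebyshev acceleration of the previous section, now applied not to (\ref{iter}) but to the alternative iterative process (\ref{newiter}), whose iteration matrix is $M^k$. Namely, $M^k$ must possess a unique nonzero eigenvalue $\mu_1$ of maximal absolute value, and all quotients $\mu/\mu_1$ must lie in the deltoid $\Delta$ as $\mu$ ranges over the eigenvalues of $M^k$.

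The first precondition will follow immediately from the hypothesis. The eigenvalues of $M^k$ are exactly $\{\lambda^k\}$ as $\lambda$ ranges over the spectrum of $M$; since $\lambda_1$ is the \emph{unique} dominant eigenvalue of $M$, every other eigenvalue $\lambda$ satisfies $|\lambda|<|\lambda_1|$, and therefore $|\lambda^k|<|\lambda_1^k|$. So $\lambda_1^k$ is the unique eigenvalue of $M^k$ of maximal absolute value, and the relevant quotients are $\lambda^k/\lambda_1^k=(\lambda/\lambda_1)^k$.

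The geometric heart of the second precondition is the inclusion $\{z\in\C : |z|\leq 1/3\}\subseteq\Delta$: the inscribed circle of the Steiner hypocycloid bounding $\Delta$ has radius $1/3$. I would read this off the parametrization (\ref{gencos}) by restricting to the boundary arc $\theta_2=-\theta_1=-t$, where $\varphi_1=\frac{2}{3}e^{2\pi it}+\frac{1}{3}e^{-4\pi it}$ attains absolute value $1/3$ at $t=1/2$, and then invoking the three-fold rotational symmetry of $\Delta$ to conclude that $1/3$ is the minimum distance from the origin to the boundary. Given this inclusion, the hypothesis $3^{-1/k}\geq|\lambda/\lambda_1|$ yields $|(\lambda/\lambda_1)^k|\leq 1/3$, so $(\lambda/\lambda_1)^k\in\Delta$ for every eigenvalue $\lambda$ of $M$, which is precisely the second precondition applied to $M^k$.

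The extension to all integers $k'\geq k$ is automatic: since the function $k\mapsto 3^{-1/k}$ is strictly increasing, we have $|\lambda/\lambda_1|\leq 3^{-1/k}\leq 3^{-1/k'}$, and the same two-step argument applies verbatim to $M^{k'}$. The main obstacle, in my view, is the geometric inclusion $\{|z|\leq 1/3\}\subseteq\Delta$; everything else reduces to spectral mapping and bookkeeping of the definitions.
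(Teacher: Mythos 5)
Your proposal is correct and follows essentially the same route as the paper: both arguments reduce to the inclusion of the closed disc of radius $1/3$ in $\Delta$ together with the spectral mapping $\lambda\mapsto\lambda^k$. The only small remark is that your symmetry argument for the inscribed radius only exhibits boundary points at distance $1/3$; the lower bound $|2e^{it}+e^{-2it}|/3\geq(2-1)/3=1/3$ for all boundary points is what the paper's reverse triangle inequality supplies, and your proof should state it.
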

\begin{proof}
The boundary of the deltoid $\Delta$ is given by $z=(2e^{it}+e^{-2it})/3$. The triangle inequality implies that $\Delta$ contains the closed disc of radius $1/3$. See Figure~\ref{fig:deltoid} for an illustration of this inequality. 

Suppose that $\lambda_2$  is an eigenvalue of $M$ satisfying
\[\rho(M) = |\lambda_1| > |\lambda_2| \geq |\lambda|	\]
for all eigenvalues $\lambda$ of $M$ except $\lambda_1$. If $\lambda/\lambda_1 \in \Delta$ for each eigenvalue $\lambda$ then we can choose $k=1$, apply the generalized Chebyshev acceleration directly to (\ref{iter}).

Assume otherwise. The complex number $\lambda_2/\lambda_1$ has absolute value less than one. On the other hand, the quantity $1/\sqrt[k]{3}$ approaches $1$ as $k$ goes infinity. Let $k$ be the smallest positive integer such that  $1/\sqrt[k]{3} \geq |\lambda_2/\lambda_1|$. It follows that  
$$ \left|\frac{\lambda^k}{\lambda_1^k}\right| \leq \left|\frac{\lambda_2^k}{\lambda_1^k}\right| \leq \frac{1}{3}.$$
In other words, the quotients $\lambda^k/\lambda_1^k$ is included in $\Delta$ for all eigenvalues $\lambda$ of $M$. This means that the generalized Chebyshev acceleration is applicable to (\ref{newiter}) with this choice of $k$.
\end{proof}

We remark that if the absolute value of $\lambda_2/\lambda_1$, as it is defined in the above proof, is small, then the generalized Chebyshev acceleration has a great potential. In such a case, it is guaranteed by Theorem~\ref{existingk} that a rather small $k$ can be used. See the following list of numbers in this respect:
\begin{equation}\label{k-ratios}
	\begin{array}{|c|c|c|c|c|c|c|c|}\hline
		k&1&2&3&4&5&6&7 \\ \hline
		1/\sqrt[k]{3}&0.333&0.577&0.693&0.759&0.802&0.832&0.854\\ \hline
	\end{array}
\end{equation}
The hypothesis of Theorem~\ref{existingk} is quite general and covers many cases. On the other hand, we remark that the $k$-value given by this theorem may not be the optimal choice. The positions of the quotients $\lambda/\lambda_1$ may allow us to use a smaller $k$-value. We illustrate this with an example. 

\begin{example}\label{ex1}
Consider the iterative method (\ref{iter}) with the four-by-four iteration matrix defined as follows
\[M=\begin{bmatrix}1.40 + 0.70i&-1.80- 2.80i&1.20- 2.80i&0.20+0.00i\\ 0.25 + 0.35i&-0.95 - 1.05i&-0.60 - 0.70i&-0.85 + 0.35i\\ 0.00+0.00i&0.90 + 0.70i&1.30 + 1.40i&0.90 + 0.70i\\ -0.25 - 0.35i&-0.45+ 0.35i&-1.20 - 0.70i&-0.55 - 1.05i
\end{bmatrix}.\]
This iteration matrix $M$ has eigenvalues $\lambda_1 = 0.90$, $\lambda_{2,3} = 0.40 \pm 0.70i$, and $\lambda_4 = -0.50$. Note that $\rho(M)=|\lambda_1| > |\lambda_2|=|\lambda_3|>|\lambda_4|$. The quotient $\lambda_2/\lambda_1$ is not in $\Delta$. As a result, generalized Chebyshev acceleration is not directly applicable (\ref{iter}). However, the quotients $\lambda_i/\lambda_1$ are in the inverse image of $\Delta$ under the power map $z\mapsto z^2$. See Figure~\ref{fig:eigenvalues}. Thus, we can apply the generalized Chebyshev acceleration to (\ref{newiter}) with $k=2$. 

\begin{figure}[htbp]
	\centering
	\includegraphics[scale=0.8]{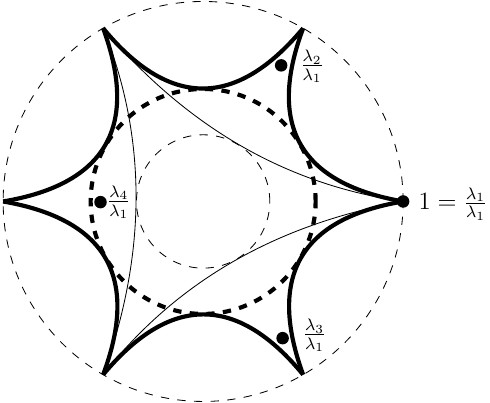}
	\caption{The positions of the values $\frac{\lambda}{\lambda_1}.$}
	\label{fig:eigenvalues}
\end{figure}

We observe that the $k$-value provided by Theorem~\ref{existingk} is far from being optimal. The eigenvalues $\lambda_1$ and $\lambda_2$ have absolute values that are very close to each other. That's why we need a big exponent $k$ so that  $(\lambda_2/\lambda_1)^k$ is included in the closed disc $|z|\leq 1/3$. Indeed, we have the following equality
$$\frac{1}{\sqrt[9]{3}}< \left|\frac{\lambda_2}{\lambda_1}\right| = 0.895 < \frac{1}{\sqrt[10]{3}}.$$
If we use Theorem~\ref{existingk}, without using the extra information about the positions of the quotients $\lambda/\lambda_1$, then we shall pick $k=10$. This requires a huge number of new iterations to be computed in comparison to the original choice $k=2$. To apply the generalized Chebyshev acceleration effectively via (\ref{newiter}), we remark that either the quotients $\lambda/\lambda_1$ shall have small absolute values, or they shall be suitably positioned. 

The following table gives the norms of related errors under (\ref{newiter}) with $k=2$ for several iterates. Obviously, the error in $\mb{x}^{(m)}$ is ultimately decreased at each stage by the factor $|\lambda_1^2|=0.81$:
\[\begin{array}{|c|c|c|c|c|c|c|c|}\hline
	m&4&5&6&7&8&9&10 \\ \hline
	|\epsilon^{(m)}|&0.884& 0.654& 0.262& 0.391& 0.298& 0.173& 0.189\\ \hline
\end{array}\]

The computation of $\mb{y}^{(m)}$ given by (\ref{main}) requires some additional information. Recall that the associated matrix $\widetilde{M}$ is an $n\times n$ matrix with the same set of eigenvalues as $M$, but the eigenvectors are switched with their complex conjugate counterparts. Consider 
\[P=\begin{bmatrix}
	-2&3&1&-1\\-1/2&1&1/2&-3/4\\0&-1&0&1/2\\1/2&0&-1/2&-1/4
\end{bmatrix}\]
Then we have
\[
D=P^{-1}MP=\left[\begin{array}{cccc}
	9/10&0&0&0\\
	0&2/5+7/10i&0&0\\
	0&0&2/5-7/10i&0\\
	0&0&0&-1/2
\end{array}\right]
\]
We set $\widetilde{M} = P \overline{D} P^{-1}$, and pick a vector $\widetilde{\mb{g}}$ that satisfies the relation $\widetilde{M}\mb{x} + \widetilde{\mb{g}}=\mb{x}$. This is all we need to apply the generalized Chebyshev acceleration. The following table gives the norms of related errors under (\ref{main}) applied to (\ref{newiter}) with $k=2$ for several iterates.
 
\[\begin{array}{|c|c|c|c|c|c|c|c|}\hline
	m&4&5&6&7&8&9&10 \\ \hline
	|\eta^{(m)}|&0.580& 0.946& 0.739& 0.129& 0.204& 0.0861& 0.0274\\ \hline
\end{array}\]

We observe that the basic iterative method is better for the first few iterations. However, the semi-iterative method eventually gives better approximations. See Figure~\ref{fig:error}. The generalized Chebyshev acceleration provides an improvement since it gives better approximations with fewer iterations. 
\begin{figure}[htbp]
	\centering
	\includegraphics[scale=0.8]{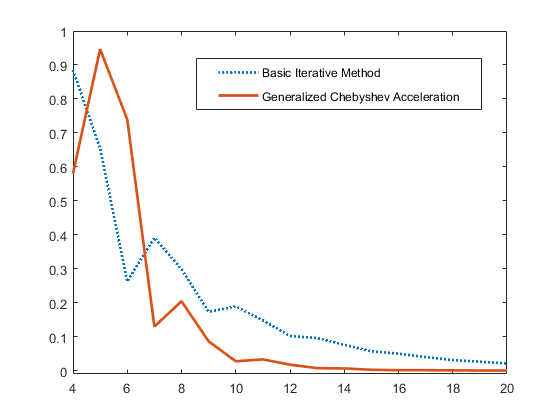}
	\caption{The comparison of the errors $\lVert\epsilon^{(m)}\rVert$ and  $\lVert\eta^{(m)}\rVert$.}
	\label{fig:error}
\end{figure}

To analyze the ultimate behavior of the error in the semi-iterative case, we generalize the treatment in \cite[7.6]{foxparker}. Note that the function $\varphi_1(\theta_1,\theta_2)$ takes real values if $\theta_1=\theta_2$. Let $\alpha$ be a complex number such that $100/81=1/\lambda_1^2=\varphi_1 (\alpha/(2\pi i),\alpha/(2\pi i))$. In our case, we can pick 
$\alpha \approx 0.816$. Expressing (\ref{main}) in terms of the function $\varphi_1$, and focusing on the dominant terms, we see that this iterative process is ultimately equivalent to the following 
\begin{align*}\label{main-v2}		
	\mb{y}^{(m)}=\ &(1+e^{-\alpha}+e^{-2\alpha})\left( M\mb{y}^{(m-1)} + \mb{g} \right) \\
	& -(e^{-\alpha}+e^{-2\alpha}+e^{-3\alpha})\left( \widetilde{M}\mb{y}^{(m-2)} + \widetilde{\mb{g}} \right)\\
	& +e^{-3\alpha}\mb{y}^{(m-3)}, \quad m\geq 3,
\end{align*}
Let us set $a=1+e^{-\alpha}+e^{-2\alpha}$, $b=-(e^{-\alpha}+e^{-2\alpha}+e^{-3\alpha})$, and $c=e^{-3\alpha}$ for the further steps. Note that the error vector satisfies the iteration 
\begin{equation*}\label{main-v4}
	\eta^{(m)} = aM\eta^{(m-1)} + b\widetilde{M}\eta^{(m-2)} +  c\eta^{(m-3)},
\end{equation*}
which can expressed in the following matrix form
\[\left[\begin{array}{c}
	\eta^{(m)}\\\eta^{(m-1)}\\\eta^{(m-2)}
\end{array}\right] = \left[\begin{array}{ccc}
	aM& b\widetilde{M}&c\\\mb{I}&\mb{0}&\mb{0} \\ \mb{0}&\mb{I}&\mb{0}
\end{array}\right] \left[\begin{array}{c}
	\eta^{(m-1)}\\ \eta^{(m-2)}\\ \eta^{(m-3)}
\end{array}\right].\]
The ultimate convergence depends on the spectral radius of the matrix on the right-hand side. It is easy to see that the eigenvalues $\mu$ of this matrix satisfy the equation
\begin{equation}\label{polyeqn}
\mu^3- a\lambda\mu^2  -b\bar{\lambda}\mu - c=0,
\end{equation}
where $\lambda$ is an eigenvalue of $M$. The largest $\mu$ occurs for the eigenvalue $\lambda$ whose absolute value is the spectral radius. Indeed, for $\lambda=\lambda_1^2$, we obtain $$|\mu|_\text{max} \approx 0.442.$$ 
We have verified experimentally that the ratio $\lVert\eta^{(m+1)}\rVert / \lVert\eta^{(m)}\rVert$ approaches to the number $0.442$. See Figure~\ref{fig:errorratios}.

\begin{figure}[htbp]
	\centering
	\includegraphics[scale=.8]{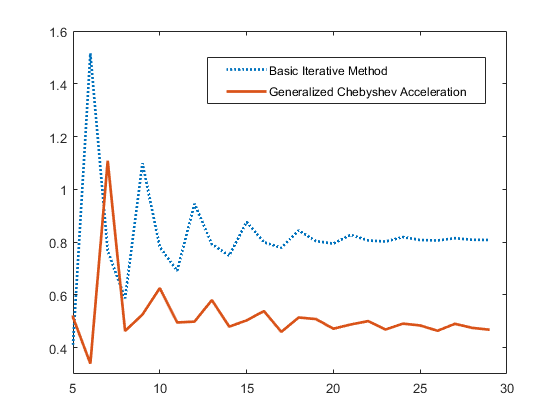}
	\caption{The comparison of  $\dfrac{\lVert\epsilon^{(m+1)}\rVert}{\lVert\epsilon^{(m)}\rVert}$ and $\dfrac{\lVert\eta^{(m+1)}\rVert}{\lVert\eta^{(m)}\rVert}$.}
	\label{fig:errorratios}
\end{figure}

Note that there is an extra multiplication in the semi-iterative method (\ref{main}) compared to (\ref{newiter}). If we have a large sparse matrix, the most consuming part is performing the matrix multiplications. Thus it is reasonable the compare the errors of $2m$th iterate of the basic iterative method with the $m$th iterate of the Chebyhev acceleration. Comparing $0.81^2\approx0.656$ with $0.442$, we see that there is still an advantage using the generalized Chebyshev acceleration instead of the basic iterative method.
\end{example}

\subsection{Several dominant eigenvalues} Now we focus on the remaining cases in which there are several dominant eigenvalues. The following theorem provides some additional cases to which (\ref{newiter}) can be applied. However, the $k$-value can be very large depending on the dominant eigenvalues.
\begin{theorem}\label{exclude}
Let $M$ be a matrix with spectral radius $\rho(M)<1$, and let $S$ be the set of dominant eigenvalues of $M$. Suppose that $|S|\geq 2$. There are two cases:
\begin{enumerate}
	\item If $\lambda_1/\lambda_2$ is a root of unity for each pair of eigenvalues $\lambda_{1}$ and $\lambda_2$ in $S$, then the generalized Chebyshev acceleration is applicable to (\ref{newiter}) for some suitable $k$-value.
	\item If there are two eigenvalues $\lambda_{1}$ and $\lambda_2$ in $S$ such that $\zeta= \lambda_1/\lambda_2$ is not a root of unity, then the generalized Chebyshev acceleration is never applicable to the iterative process (\ref{newiter}).
\end{enumerate}
\end{theorem}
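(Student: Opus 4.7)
The plan is to handle the two cases separately, with both resting on the geometric fact that the deltoid $\Delta$ intersects the unit circle in exactly the three cube roots of unity $\{1,\omega,\omega^2\}$, where $\omega=e^{2\pi i/3}$. I would establish this at the outset, either from the boundary parameterization $z=(2e^{it}+e^{-2it})/3$ already used in the proof of Theorem~\ref{existingk} (imposing $|z|=1$ reduces, after expansion, to $\cos(3t)=1$, so $t\in\{0,2\pi/3,4\pi/3\}$), or directly from (\ref{gencos}) by writing $3\varphi_1(\theta)$ as a sum of three unit complex numbers and invoking the equality case of the triangle inequality combined with the built-in relation that the three phases sum to zero mod $2\pi$.

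For case (1), I would set $N$ to be a common multiple of the orders of the (finitely many) roots of unity $\lambda/\lambda_1$ as $\lambda$ runs through $S$. Then $\lambda^N=\lambda_1^N$ for every $\lambda\in S$, so $\lambda_1^N$ is the unique distinct dominant eigenvalue of $M^N$, while every other eigenvalue of $M^N$ has absolute value strictly less than $\rho(M)^N$. Theorem~\ref{existingk} applied to $M^N$ then produces an integer $k'\geq 1$ such that the generalized Chebyshev acceleration is applicable to the iteration associated with $(M^N)^{k'}=M^{Nk'}$, so $k=Nk'$ is a valid choice for (\ref{newiter}).

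For case (2), I would argue by contradiction. Suppose the generalized Chebyshev acceleration applies to (\ref{newiter}) for some $k\geq 1$ with reference eigenvalue $\mu_1$, a nonzero dominant eigenvalue of $M^k$. Both $\lambda_1^k$ and $\lambda_2^k$ are dominant eigenvalues of $M^k$, so the ratios $\lambda_1^k/\mu_1$ and $\lambda_2^k/\mu_1$ lie on the unit circle and in $\Delta$, hence in $\{1,\omega,\omega^2\}$. Their quotient $\zeta^k=\lambda_1^k/\lambda_2^k$ is therefore itself a cube root of unity, so $\zeta^{3k}=1$, contradicting the hypothesis that $\zeta$ is not a root of unity. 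The only delicate ingredient in the whole proof is the characterization $\Delta\cap\{|z|=1\}=\{1,\omega,\omega^2\}$; once that is in place, each case reduces to straightforward bookkeeping about how the spectrum of $M$ transforms under the power map $\lambda\mapsto\lambda^k$.
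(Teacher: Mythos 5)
Your proof is correct and follows essentially the same route as the paper's: in case (1) you collapse the dominant eigenvalues by raising to a common multiple $N$ of the orders of the ratios (the paper uses the order of the group these ratios generate) and then dispose of the subdominant spectrum via Theorem~\ref{existingk}, and in case (2) you rule out applicability because the ratios of dominant eigenvalues remain on the unit circle under every power map. Your explicit verification that $\Delta$ meets the unit circle only in the three cube roots of unity is a detail the paper's proof leaves implicit, and supplying it makes the contradiction in case (2) fully rigorous.
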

\begin{proof} A complex number $z$ is called a \textit{root of unity} if there exists a positive integer $n$ such that $z^n=1$. Suppose that $\lambda_1/\lambda_2$ is a root of unity for each pair of eigenvalues $\lambda_{1}$ and $\lambda_2$ in $S$. Consider the multiplicative group $G$ generated by the roots of unities $\lambda_1/\lambda_2$ with $\lambda_{1},\lambda_2 \in S$. Let $k_0=|G|$ be the order of this finite group. If all the eigenvalues of $M$ are dominant, then the matrix $M^{k_0}$ has a single eigenvalue, namely $\lambda_1^{k_0}$ for any $\lambda_1 \in S$. In such a case, we can choose $k=k_0$, and the generalized Chebyshev acceleration is applicable to (\ref{newiter}) for this $k$-value.
	
Otherwise, there exists an eigenvalue of $M$ outside the set $S$. We let $\gamma$ be a such an eigenvalue with maximal absolute value. Let $\lambda_1$ be any element in $S$. We have $$\rho(M) = |\lambda_1| > |\gamma| \geq |\lambda|$$
for all eigenvalues $\lambda$ of $M$ outside of $S$. Using the idea in the proof of Theorem~\ref{existingk}, we let $k_1$ be the smallest positive integer such that  $1/3 \geq |\gamma/\lambda_1|^{k_1}$. In this case, we choose $k=k_0k_1$, and the generalized Chebyshev acceleration is applicable to (\ref{newiter}) for this bigger $k$-value.
	
It remains to consider the case for which the generalized Chebyshev acceleration is not applicable. Let $\lambda_{1}$ and $\lambda_2$ in $S$ such that $\zeta= \lambda_1/\lambda_2$ is not a root of unity. The complex numbers $(\lambda_2/\lambda_1)^{k}, k\geq 1$, is never in $\Delta$. Under the generalized Chebyshev polynomials, the iterations of complex numbers outside $\Delta$, such as $(\lambda_2/\lambda_1)^{k}$, tend to infinity. This means that the generalized Chebyshev acceleration applied to (\ref{newiter}) never converges. 
\end{proof}

\section{Normal Sparse Matrices}
In this final section, we analyze the efficiency of our method by applying it to a sparse matrix of large dimension with certain properties. As usual, consider a basic iterative method with an iteration matrix $M$, see (\ref{iter}). Recall that the spectral radius $\rho(M)$ is strictly less than one, and the iterative method converges to the unique solution of the system $(I_n-M)\mathbf{x}=\mathbf{g}$. Before giving our example we first give a heuristic argument for asymptotic convergence, and secondly explain the necessity for the use of normal matrices. 

\subsection{A heuristic argument for asymptotic convergence} If $M$ has real spectrum, and if $\rho(M)=\lambda_1$ for some positive $\lambda_1$, then the classical Chebyshev acceleration has asymptomatic convergence $(1-\sqrt{1-\lambda_1^2})/\lambda_1$.  If $1/\lambda_1 = \cosh(\alpha)=(e^{\alpha}+e^{-\alpha})/2$, then this formula returns $e^{-\alpha}$. See \cite{foxparker}, for the details of this result.

Suppose that the hypothesis of Theorem~\ref{existingk} holds, and for simplicity let us assume that $\rho(M)=\lambda_1$ is real. Following the treatment in \cite{foxparker}, we focus on the unique positive real $\alpha$ such that 
\begin{equation}\label{alpin}
\frac{1}{\lambda_1} = \varphi_1\left(\frac{\alpha}{2\pi i},\frac{\alpha}{2\pi i} \right) = \frac{1}{3}(e^{\alpha}+e^{-\alpha}+1).
\end{equation}
For each eigenvalue $\lambda$ of $M$, we consider the polynomial equation (\ref{polyeqn}). Heuristically, we have found that this polynomial equation has a root with maximal absolute value specifically for $\lambda=\lambda_1$. Moreover, this root turns out to be $e^{-\alpha}$, similar to the classical Chebyshev acceleration. If the equation (\ref{alpin}) is satisfied, we note that $2\lambda_1/(3-\lambda_1) ) = 2/(e^{\alpha}+e^{-\alpha})$. Imitating the computation in the case of classical Chebyshev acceleration, we find in this new case that
$$g(\lambda_1)=\frac{1-\sqrt{1-\left(\frac{2\lambda_1}{3-\lambda_1}\right)^2}}{\left(\frac{2\lambda_1}{3-\lambda_1}\right)} =  \frac{1-\sqrt{1-\left( \frac{2}{e^{\alpha}+e^{-\alpha}} \right)^2}}{\left( \frac{2}{e^{\alpha}+e^{-\alpha}} \right)} = e^{-\alpha}.$$

The iterative process (\ref{iter}) has a single multiplication by $M$. On the other hand, there are two multiplications by $M$ and $\widetilde{M}$  while applying the generalized Chebyshev acceleration (\ref{main}). For sparse matrices of large dimension, this makes a huge difference. To make a fair comparison, we shall compare the quantities $\lambda_1^2$ and $g(\lambda_1)$. See Figure~\ref{fig:faircomp} for this comparison.
\begin{figure}[htbp]
	\centering
	\includegraphics[scale=.5]{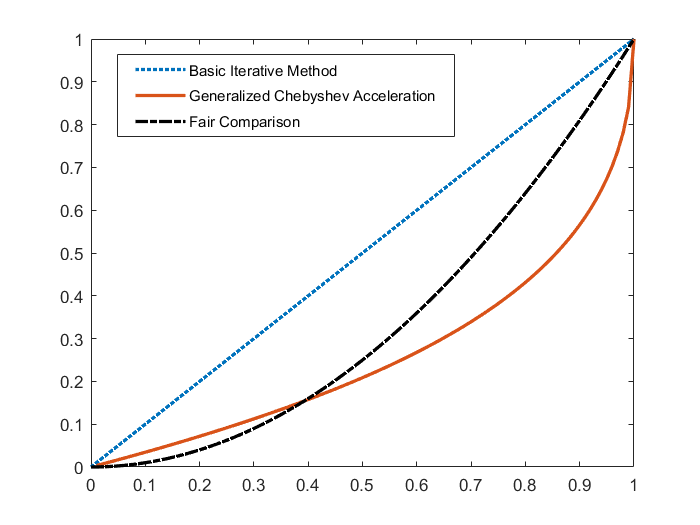}
	\caption{The fair comparison of the basic iterative method and the generalized Chebyshev acceleration for large sparse matrices.}
	\label{fig:faircomp}
\end{figure}

Setting $g(\lambda_1)=\lambda_1^2$, we find that the intersection occurs at $\lambda_1 \approx 0.392$, the unique real root of the polynomial equation $z^3 + z^2 + 2z - 1=0$. This means that if we have an iterative method (\ref{iter}), with spectral radius $\rho(M)$ attained by a unique real eigenvalue $\lambda_1$, then applying the generalized Chebyshev acceleration is practical only if  $\lambda_1$ is bigger than $0.392$.

This computation can be naturally generalized to (\ref{newiter}), and its accelerated counterpart. Computing $\sqrt[k]{0.329}$ for $k\geq 1$, we obtain lower bounds for $\lambda_1$ as follows:
\begin{equation}\label{feas}
\begin{array}{|c|c|c|c|c|c|c|c|}\hline
	k&1&2&3&4&5&6&7\\ \hline
	\lambda_1 \geq&0.392&0.626&0.732&0.791&0.829&0.855&0.874\\\hline
\end{array}
\end{equation}
For each positive integer $k$, applying the generalized Chebyshev acceleration to (\ref{newiter}) is practical only if  $\lambda_1$ is bigger than the indicated constant. Recall that we aim to choose a suitable $k$-value in the light of Theorem~\ref{existingk}. By this consideration, the condition $(\lambda/\lambda_1)^k \leq 1/3$ shall be satisfied for each eigenvalue $\lambda$ of $M$, as well. See (\ref{k-ratios}) for a list of the values $\sqrt[k]{1/3}$.

\subsection{The use of normal matrices}
Recall that there are two restrictions while applying the generalized Chebyshev semi-iterative method: the positions of the quotients $\lambda/\lambda_1$, and the computation of $\widetilde{M}$ and $\widetilde{\mb{g}}$. In this paper, we have provided a method that relaxes the former condition by using an alternative iterative process (\ref{newiter}). However, the computation of $\widetilde{M}$ and $\widetilde{\mb{g}}$ remains difficult, especially for arbitrary sparse matrices of large dimension. 

We shall restrict our attention to a case in which the computation of $\widetilde{M}$ is manageable. For this purpose, we suppose that $M$ is normal. Recall that a matrix is called normal if it commutes with its conjugate transpose. For a normal matrix $M$, the conjugate transpose is analogous to the complex conjugate, and we can pick $\widetilde{M} = M^*$. The spectral theorem states that a matrix is normal if and only if it is unitarily similar to a diagonal matrix.

We now explain how to produce a normal sparse matrix $M$ randomly. We first choose a diagonal matrix $D$ with nonzero diagonal entries. Secondly, we choose a random unitary matrix $U_0$ of relatively small dimension and extend this matrix to the same size as $M$ by putting ones on the diagonal, and zeros elsewhere. We set $U=PU_0$ where $P$ is a random permutation matrix with the same size as $M$. We set $M=U^*DU$, and pick $\widetilde{M} = M^* = UDU^*$. We also set $(I_n-\widetilde{M})\mathbf{x}=\widetilde{\mb{g}}$. This is all we need to apply the iteration (\ref{main}). In each iteration, the generalized Chebyshev semi-iterative method requires three scalar multiplications, four vector additions, and the multiplication of two sparse matrices, namely $M$ and $\widetilde{M}$, with certain column vectors. For a sparse matrix of reasonable size, these operations are performed very fast.
	
\begin{example} In this example, we apply the generalized Chebyshev semi-iterative method to a normal sparse matrix of dimension $1000\times 1000$ randomly constructed as described above. This numerical experiment was carried out using MATLAB (R2024b), which allows efficient sparse matrix operations.
	
We first form a diagonal matrix $D$, with nonzero diagonal entries, that has spectral radius $\rho(D)=0.9$. This spectral radius is attained by a unique maximal real eigenvalue $\lambda_1=0.9$, and the other eigenvalues are chosen randomly in the unit disc $|z|\leq \lambda_2=0.6$. This can be done by using the function $a\cdot\exp(2\pi i b)$ with random pairs $(a,b) \in [0,1]\times [0,1]$, and multiplying the resulting complex numbers by $\lambda_2$. For these eigenvalues, the hypothesis Theorem~\ref{existingk} is satisfied with $k=3$. Note that $\lambda_1 \geq 0.732$, and therefore applying the generalized Chebyshev acceleration is practical by (\ref{feas}).  Figure~\ref{fig:eigen} shows the positions of the complex numbers $(\lambda/\lambda_1)^k$ for $k=1, 3$.

\begin{figure}[htbp]
	\centering
	\includegraphics[scale=.5]{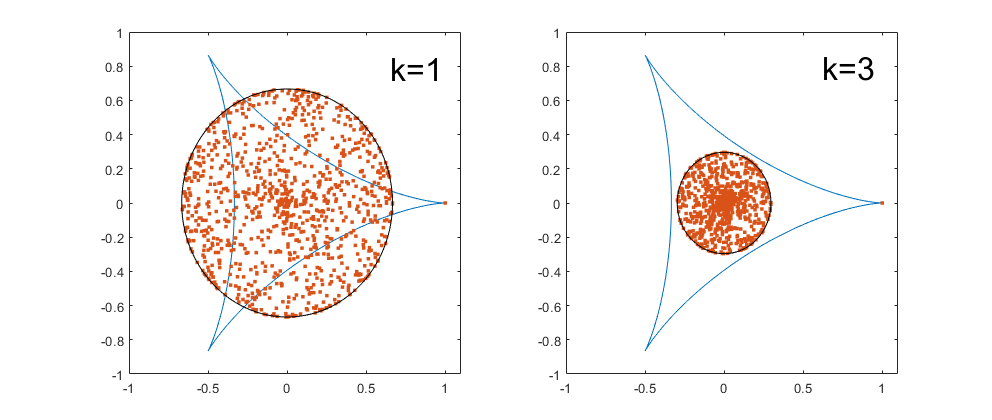}
	\caption{The positions of the quotients $(\lambda/\lambda_1)^k$}
	\label{fig:eigen}
\end{figure}

Secondly, we choose a random unitary matrix $U_0$ of relatively small dimension, say $100\times 100$, and extend this matrix to the same size as $M$ by putting ones on the diagonal, and zeros elsewhere. We set $U=PU_0$ where $P$ is a random permutation matrix with the same size as $M$, and set $M=U^*DU$. The sparse matrix $M$ formed in this fashion, and its conjugate transpose $M^*$, have around 10000 nonzero entries. We choose $\mathbf{x}$ to be the vector whose components are all equal to one, and create $\mb{g}$ and $\widetilde{\mb{g}}$, accordingly. 

We record the norms of the errors attached to the standard iteration (\ref{iter}), and the semi-iterative method (\ref{main}), as well as their ratios $\lVert\epsilon^{(m+1)}\rVert/\lVert\epsilon^{(m)}\rVert$, and $\lVert\eta^{(m+1)}\rVert/\lVert\eta^{(m)}\rVert$, i.e. rates of convergence, for the first 20 iterates. See Figure~\ref{fig:simultaneouscomp}.
\begin{figure}[htbp]
	\centering
	\includegraphics[scale=.4]{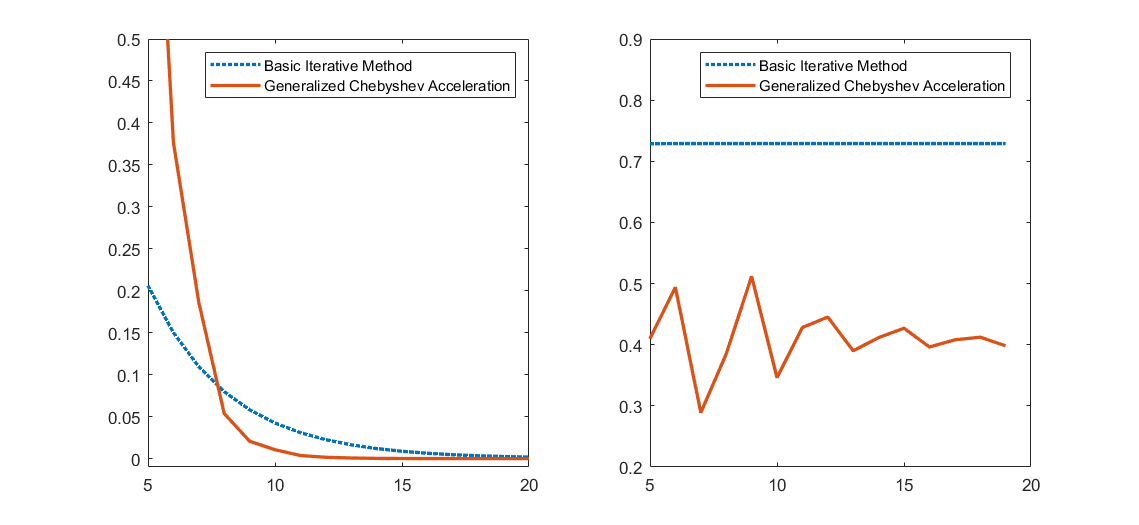}
	\caption{The comparison of the norms of errors and the rates of convergence, respectively.}
	\label{fig:simultaneouscomp}
\end{figure}
We observe that the basic iterative method is better for the first few iterations, but the semi-iterative method eventually gives better approximations. The ultimate rate of convergence, attached to the standard iteration (\ref{newiter}), is $\lambda_1^3=0.729$. This agrees with the experimental data. Moreover, we have $g(\lambda_1^3) = 0.363$. The asymptotic rate of convergence of this example turns out to be slightly bigger than $g(\lambda_1^3)$, at least for the first few iterates, but still smaller than the fair comparison value $(\lambda_1^3)^2=0.531$. 
\end{example}

\section{Acknowledgement}
The author thanks Ö. Küçüksakallı for some useful discussions about group theoretical concepts included in the proof of Theorem~\ref{exclude}.

{\small
\def\refname{References}
\newcommand{\etalchar}[1]{$^{#1}$}

\end{document}